\font\bBB=msbm10
\def\bBR{\mbox{\bBB R}}
\def\bBZ{\mbox{\bBB Z}}
\def\R3{\bBR ^3}
\def\Z{\bBZ}
\def\Zz{\bBZ_2}
\begin{document}
\title {A Class of Efficient Presentations of Finite Simple Groups}
\author{
   Orlin Stoytchev\thanks{American University in Bulgaria, 2700 Blagoevgrad, Bulgaria,  ostoytchev@aubg.edu} }
\date{}
\maketitle

\abstract {We exhibit a new presentation of the (equilateral) Von Dyck groups  $D(2,3,n), \ n\ge 3$, in terms of two generators of order $n$ satisfying three relations, one of which is Artin's braid relation. By dropping the relation which fixes the order of the generators we obtain the universal covering groups of the corresponding Von Dyck groups. In the cases $n=3,\, 4,\,5$, these are respectively the double covers of the finite rotational tetrahedral, octahedral and icosahedral groups. When $n\ge 6$ we obtain infinite covers of the corresponding infinite Von Dyck groups. The interesting cases arise for $n\ge 7$ when these groups act as discrete groups of isometries of the hyperbolic plane. Imposing a suitable third relation we obtain a host of (efficient) presentations of finite simple Chevalley groups of type $A_1$ as well as the sporadic Janko group $J_2$.   }

\section{Rotations, Braid Groups, Von Dyck Groups and Their Covers }

The Von Dyck groups $D(l,m,n)$, which many authors call rotational triangle groups, or just triangle groups, are the finitely presented groups with two generators and three relations as follows \cite{Cox}:
\begin{equation}\label{pres}
D(l,m,n):=\left<x,y\, |\, x^l=y^m=(xy)^n=\mathbf 1\right> \ ,
\end{equation}
where $l$, $m$, and $n$ are integers greater than 1. These groups have a geometric realization as discrete subgroups of the groups of isometries of a simply connected Riemann surface with constant positive curvature (sphere), zero curvature (Euclidean plane), or negative curvature (hyperbolic plane). They are generated by rotations about two of the vertices of a geodesic triangle with angles $\pi/l$, $\pi/m$, and $\pi/n$. The rotations are at angles twice the angle at each vertex. Obviously depending on whether the sum $\frac{1}{l} +\frac{1}{m} + \frac{1}{n}$ is greater than, equal, or less than 1, we get a geodesic triangle on the sphere, the Euclidean plane, or the hyperbolic plane, respectively. Further, since the sphere is compact, the requirement for discreteness imposes in this case a restriction on the size of the defining triangle. Figure \ref{VonDyck} illustrates the action of the generators $x$ and $y$ and explains the origin of the third relation in Equation \ref{pres}.\par
\begin{figure}
\centering
\includegraphics[width=150mm]{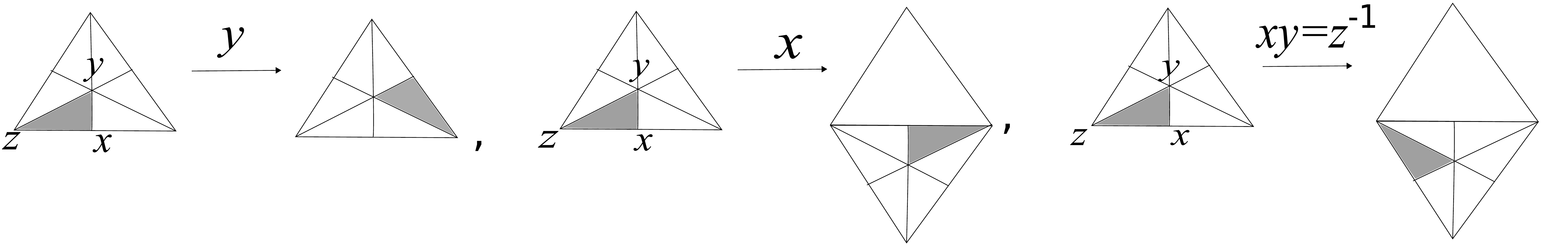}
\caption{The standard generators of $D(2,3,n)$ }
\label{VonDyck}
\end{figure}
The action of the Von Dyck group defined in this way covers the surface by copies of the defining triangle and its mirror image across one of the sides and produces a tessellation of the surface. The defining triangle and its reflection constitute a fundamental domain for the action of the group. It is quite obvious from the geometric picture that the Von Dyck groups are finite in the spherical case and infinite otherwise.\par
We concentrate further on the most symmetric cases $D(2,3,n)$, $n\ge 3$, which we call equilateral, as the large triangle obtained by the action of $y$ on the fundamental domain is equilateral. Our picture (Figure \ref{VonDyck}) corresponds to such a case. The sum $\frac{1}{2} +\frac{1}{3} + \frac{1}{n}$ is greater than 1 only when $n=3,4,5$. The sphere is tessellated by the equilateral triangles described above and $n$ is the number of triangles that meet at each vertex. Thus, we get a spherical model of the tetrahedron when $n=3$,  of the octahedron when $n=4$, and of the icosahedron when $n=5$. The groups of rotational symmetries of these regular polyhedra are precisely the finite groups $D(2,3,3)$ of order 12, $D(2,3,4)$ of order 24 and $D(2,3,5)$ of order 60. The hyperbolic case is of greater interest as many interesting finite groups are obtained as factors of Von Dyck groups.
\par
The geometric picture suggests a different set of generators for $D(2,3,n)$. We consider the equilateral geodesic triangle and two rotations $a$ and $b$ about two of the vertices (which we also denote by $a$ and $b$) at an angle $2\pi/n$, which is the angle at each vertex. It is immediate that $x=aba$ and $y=ab$. However, as shown on Figure \ref{Artin}, we also have $x=bab$, which implies that the two generators satisfy Artin's braid relation $aba=bab$.\par
Further, $a$ and $b$ satisfy $ab^2a=b^{n-2}$, which is illustrated on Figure \ref{Rel2}. Notice that this relation carries information about the number $n$ of equilateral triangles that meet at each vertex, when the surface is tessellated by them.
\begin{figure}
\centering
\includegraphics[width=100mm]{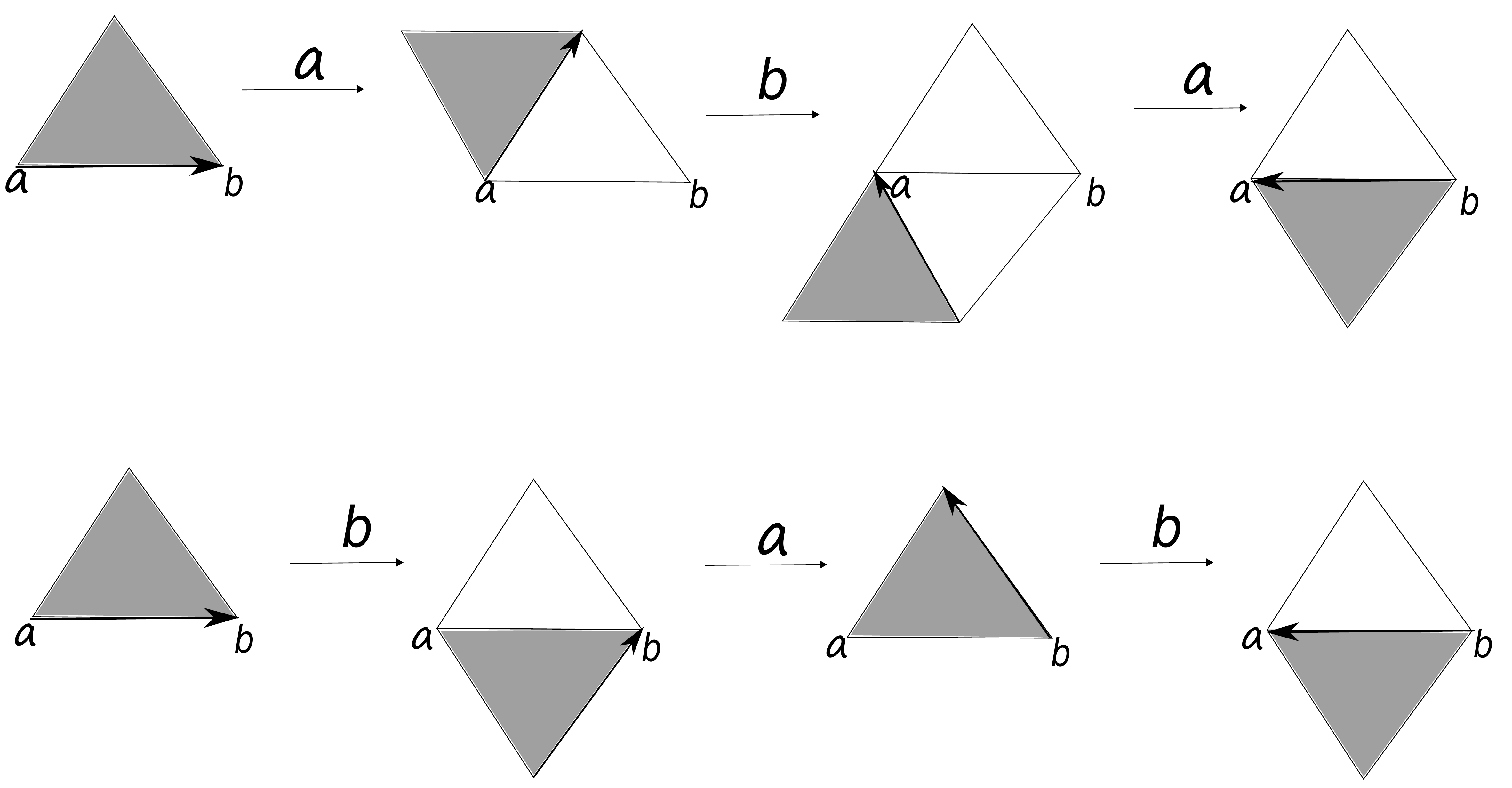}
\caption{An illustration of Artin's relation $aba=bab$}
\label{Artin}
\end{figure}

\begin{figure}
\centering
\includegraphics[width=130mm]{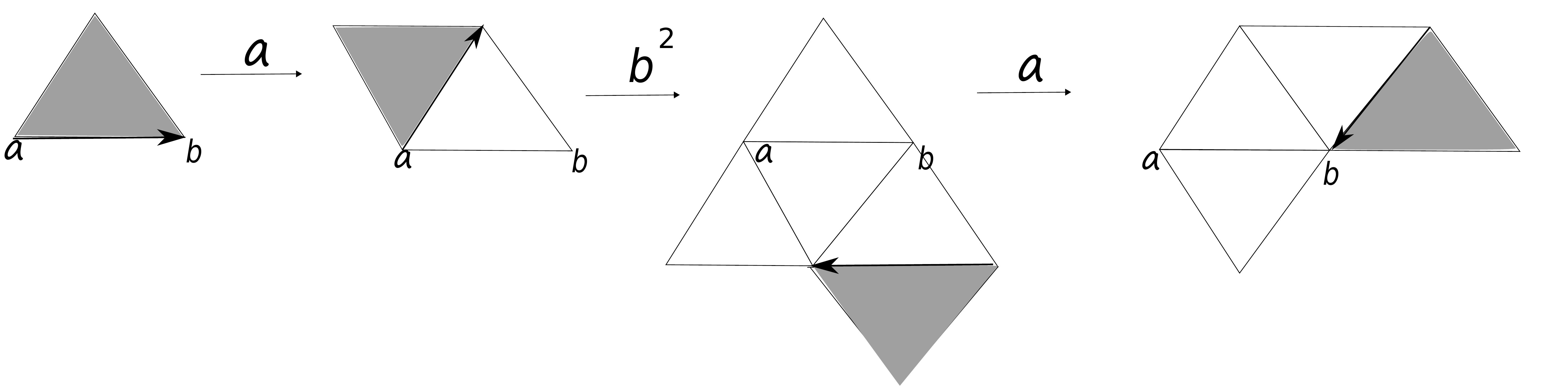}
\caption{The relation $ab^2a=b^{n-2}$}
\label{Rel2}
\end{figure}
The third relation is a condition on the order of (both) generators, namely $a^n=\mathbf 1$. 
\newtheorem{prop}{Proposition}
\begin{prop}\label{Prop1}
The group $D_n
:=\left< a,b\  |\ aba=bab,\,ab^2a=b^{n-2},\,a^n=\mathbf 1 \right> $ is isomorphic to the Von Dyck group $D(2,3,n)$.
\end{prop}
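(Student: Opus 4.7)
I would establish the isomorphism by constructing mutually inverse homomorphisms
\begin{equation*}
\phi: D(2,3,n) \longrightarrow D_n, \qquad \psi: D_n \longrightarrow D(2,3,n),
\end{equation*}
with the geometric discussion preceding the proposition dictating their definitions on generators: $\phi(x) = aba$, $\phi(y) = ab$, and $\psi(a) = y^{-1}x$, $\psi(b) = xy^{-1}$.

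The direction $\psi$ is routine. Repeated use of $x^2 = y^3 = \mathbf 1$ collapses both $\psi(aba)$ and $\psi(bab)$ to $x$, verifying the braid relation; $\psi(a)^n = (y^{-1}x)^n = ((xy)^{-1})^n = \mathbf 1$ is immediate; and since $\psi(b)^{-1} = yx$ is conjugate to $xy$, $\psi(b)$ has order dividing $n$, so $\psi(ab^2 a) = \psi(b)^{n-2}$ reduces after simplification to a tautology.

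The substance of the proposition lies in verifying that $\phi$ is well-defined, i.e.\ that $(aba)^2 = (ab)^3 = ((aba)(ab))^n = \mathbf 1$ in $D_n$. The braid relation alone yields
\begin{equation*}
(aba)^2 = (aba)(bab) = (ab)^3,
\end{equation*}
so it suffices to prove $(ab)^3 = \mathbf 1$. Applying the braid relation to the middle $aba$ inside $ababab$ and then invoking $ab^2 a = b^{n-2}$ produces the key identity
\begin{equation*}
(ab)^3 \,=\, ab(aba)b \,=\, ab(bab)b \,=\, (ab^2 a)\,b^2 \,=\, b^{n-2}\cdot b^2 \,=\, b^n,
\end{equation*}
reducing everything to the single claim $b^n = \mathbf 1$.

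I expect this last reduction to be the sole conceptual step, since the defining relations of $D_n$ fix only the order of $a$. It is handled by rewriting the braid relation as $(ba)^{-1} a\,(ba) = b$, which exhibits $a$ and $b$ as conjugate in $D_n$; the hypothesis $a^n = \mathbf 1$ then forces $b^n = \mathbf 1$. The remaining $D(2,3,n)$-relation follows from the analogous short manipulation
\begin{equation*}
\phi(y)\phi(x) \,=\, (ab)(aba) \,=\, (bab)(ba) \,=\, b(ab^2 a) \,=\, b^{n-1} \,=\, b^{-1},
\end{equation*}
whence $(\phi(x)\phi(y))^n = \mathbf 1$ by conjugation. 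To finish, one checks that $\phi\circ\psi$ and $\psi\circ\phi$ are the identity on generators; both are short computations that reuse the identities already derived (in particular $yx = b^{-1}$ and $aba\cdot b^{-1}a^{-1} = bab\cdot b^{-1}a^{-1} = b$).
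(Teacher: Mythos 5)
Your proposal is correct and follows essentially the same route as the paper: the same generator correspondences $x=aba$, $y=ab$ and $a=y^{-1}x=y^{2}x$, $b=xy^{-1}=xy^{2}$, with the defining relations verified in both directions. Your individual verifications differ only in minor ways (you get $b^{n}=\mathbf 1$ from the conjugacy $b=(ba)^{-1}a(ba)$ rather than by iterating the braid relation, and you reduce $x^{2}=(ab)^{3}$ to $b^{n}$ instead of invoking the mirrored relation $ba^{2}b=a^{n-2}$), and you additionally make explicit that the two maps are mutually inverse, a check the paper leaves implicit.
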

\begin{proof}
We notice that the group $D_n$ is a factor group of the braid group with three strands $B_3$. The latter has an obvious automorphism generated by the the map $a\leftrightarrow b$, so any additional relation for $a$ and $b$ automatically implies a symmetric relation with $a$ and $b$ interchanged. This observation shows why having two generators that satisfy Artin's braid relation is economical. In particular it follows that $b^n=\mathbf 1$ and $ba^2b=a^{n-2}$. We can also see these directly. Indeed, applying Artin's braid relation $n$ times we get
$$
bab^na^{-1}b^{-1}=babb^{n-1}a^{-1}b^{-1}=abab^{n-1}a^{-1}b^{-1}=...=a^n baa^{-1}b^{-1}=a^n=\mathbf 1\ ,
$$
which obviously implies $b^n=\mathbf 1$. In a similar way we write
$$bab^{n-2}a^{-1}b^{-1}=a^{n-2}\ .
$$
Now we use the second relation for $D_n$ inside the expression on the left:
$$a^{n-2}=baab^2aa^{-1}b^{-1}=ba^2b\ .
$$
Assume now that the group $D_n$ is defined as above and set $x:=aba$ and $y:=ab$. Then
\begin{eqnarray}\nonumber
x^2&=&abaaba=aa^{n-2}a=a^n=\mathbf 1\ ,\\ \nonumber
y^3&=&ababab=abaaba=x^2=\mathbf 1\ ,\\ \nonumber
(xy)^n&=&(abaab)^n=(a^{n-1})^n=(a^{-1})^n=\mathbf 1 \ .\\ \nonumber
\end{eqnarray}
Conversely, let $x$ and $y$ be the standard generators of $D(2,3,n)$ as in Equation \ref{VonDyck} and set $a=y^2x$ and $b=xy^2$. Then $aba=y^2xxy^2y^2x=y^6x=x$. At the same time $bab=xy^2y^2xxy^2=xy^6=x$, so Artin's braid relation holds for $a$ and $b$. Next, $a^n=(y^2x)^n=(y^{-1}x^{-1})^n=(xy)^{-n}=\mathbf 1$ and, as we saw, this implies also $b^n=\mathbf 1$. Finally, $ab^2a=y^4xy^4x=(yx)^2=(y^{-2}x^{-1})^2=(xy^2)^{-2}=b^{-2}=b^{n-2}$.
\end{proof}
We now considers the group $\overline D_n:=\left< a,b\  |\ aba=bab,\,ab^2a=b^{n-2} \right> $. In other words we drop the condition on the order of $a$ and $b$. We still have the action of $\overline D_n$ on the tessellated sphere, Euclidean plane, or hyperbolic plane, except that now the action is not faithful. Further, we can think of continuous motions of the respective surfaces, so that $a$ can be considered as the continuous rotation about the vertex $a$ from angle $0$ to angle $2\pi/n$, $b$ is a similar continuous rotation about the vertex $b$, $ab$ means first performing the motion given by $b$ and then the one given by $a$, and so on. We can now state that the two relations in the definition of $\overline D_n$ hold in a stronger, homotopy sense. Namely, the motion determined by $aba$ can be continuously deformed to the one determined by $bab$ and the motion determined by $ab^2a$ can be continuously deformed to the one determined by $b^{n-2}$. (Notice that the full twists determined by $a^n$ or $b^n$ cannot be deformed to the trivial motion. Notice also that the spherical case is substantially different from the other cases, as a rotation by $4\pi$ of the sphere is continuously deformable to the trivial motion, which is not true for the other cases.)
\begin{prop}\label{Prop2}
The group $\overline D_n$ is the universal central extensions of $D(2,3,n)$. When $n=3,\,4, \,5$ the extension is by $\Zz$ and the group obtained is the binary tetrahedral, binary octahedral, and binary icosahedral group, respectively. We have the short exact sequence
\begin{equation}
1\longrightarrow \Zz\longrightarrow \overline D_n\longrightarrow D(2,3,n)\longrightarrow 1\ .
\label{double}
\end{equation}
In the case $n\ge 6$ we get  central extensions by $\Z$ of the infinite Von Dyck group $D(2,3,n)$:
\begin{equation}
1\longrightarrow \Z\longrightarrow \overline D_n\longrightarrow D(2,3,n)\longrightarrow 1\ .
\label{universal}
\end{equation}
\end{prop}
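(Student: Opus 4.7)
The plan is to exhibit a distinguished central element $z\in\overline D_n$ such that $\overline D_n/\langle z\rangle\cong D(2,3,n)$, and then to determine the order of $z$ case-by-case by lifting the action of $\overline D_n$ to the universal cover of the ambient rotation group.

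Using the braid relation alone one has $(ab)^3=(aba)(bab)=(aba)^2=(bab)(aba)=(ba)^3$. Combining this with the second defining relation $ab^2a=b^{n-2}$ and its $a\leftrightarrow b$ companion $ba^2b=a^{n-2}$ (both available from the proof of Proposition \ref{Prop1}), the one-line braid-middle computation
\[
(ab)^3=a(bab)ab=a(aba)ab=a^2(ba^2b)=a^2\cdot a^{n-2}=a^n
\]
gives $z:=a^n=(ab)^3=(ba)^3=b^n$. Centrality is then immediate: $z$ commutes with $a$ as a power of it, and $bz=b(ab)^3=(ba)^3b=b^nb=zb$. Adjoining $z=\mathbf 1$ imposes $a^n=\mathbf 1$ on the presentation of $\overline D_n$, which by Proposition \ref{Prop1} cuts it down to precisely $D(2,3,n)$, so $1\to\langle z\rangle\to\overline D_n\to D(2,3,n)\to 1$ is a central cyclic extension.

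For the spherical cases $n=3,4,5$ I would construct an explicit homomorphism $\overline D_n\to SU(2)=\mathrm{Spin}(3)$ by lifting the two generating vertex rotations of the tetrahedron, octahedron, or icosahedron through the double cover $SU(2)\to SO(3)$. The braid relation and $ab^2a=b^{n-2}$ already hold in $SO(3)$, and a judicious choice of lifts makes them hold in $SU(2)$ as well; the image is then the binary polyhedral group $2T$, $2O$, or $2I$, of order $2|D(2,3,n)|$, and $z$ maps to $-I$. A simple order count (the quotient by $\langle z\rangle$ already has the correct size) forces the map to be an isomorphism, so $\langle z\rangle\cong\Zz$, yielding \eqref{double}.

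For $n\ge 6$ I would instead lift $a,b$ to the universal cover of the orientation-preserving isometry group of the ambient Euclidean ($n=6$) or hyperbolic ($n\ge 7$) plane, the latter being $\widetilde{PSL(2,\bBR)}$. Both defining relations of $\overline D_n$ are identities between homotopy classes of the continuous one-parameter motions generated by $a$ and $b$, as noted in the paragraph before the proposition, so they lift to the universal cover; but $z$, which corresponds to the full $2\pi$-rotation, lifts to a generator of the central fibre $\cong\Z$, and therefore has infinite order. Hence $\langle z\rangle\cong\Z$ and we obtain \eqref{universal}. The main obstacle I expect is verifying injectivity of this lifting map --- i.e., that the two relations of $\overline D_n$ are the only relations forced on the lifts of $a$ and $b$ --- which I would address by comparing the given presentation of $\overline D_n$ with the orbifold fundamental group of the quotient surface punctured at the three cone points, exploiting the already-established faithful and properly discontinuous action of $D(2,3,n)$. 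Universality of the central extension in each case is then inherited from universality of the chosen topological cover.
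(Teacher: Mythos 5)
Your proposal is correct and follows essentially the same route as the paper: you identify the same central element $a^n=b^n=(ab)^3$ by the same braid-relation manipulation, observe that killing it recovers $D(2,3,n)$ via Proposition~\ref{Prop1}, and then determine its order topologically --- via the double cover $SU(2)\to SO(3)$ in the spherical cases and the universal cover of the isometry group (with fibre $\Z$) in the Euclidean and hyperbolic cases. Your explicit flagging of the injectivity/faithfulness issue for the lift is the one point where you are more careful than the paper, which passes over it by simply asserting that $\overline D_n$ ``has the meaning of'' the group of homotopy classes of paths ending in the polyhedral (resp.\ Fuchsian) subgroup.
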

\begin{proof}
We will use a topological argument. First notice that $a^n=b^n$ is central in $\overline D_n$. Indeed, using the two relations for $\overline D_n$ we see that
$$
ababab=abaaba=a^n\ ,
$$
and similarly $b^n=bababa=ababab=a^n$. But the element $(ab)^3$ is central in the braid group $B_3$ and therefore also in $\overline D_n$, which can be seen immediately by applying Artin's relation. In the spherical case the group $\overline D_n$ has the meaning of the group of homotopy classes of paths in $SO(3)$, starting at the identity and ending at some element of the finite tetrahedral, octahedral, or icosahedral subgroup of $SO(3)$. The projection $\overline D_n \rightarrow D(2,3,n)$ obtained by imposing the additional relation $a^n=\mathbf 1$ is a restriction of the double cover $SU(2) \rightarrow SO(3)$ and is therefore a double cover itself. In particular $a^n$ has the meaning of a rotation from $0$ to $2\pi$ about some axis, which is not topologically trivial, but, as is well known, $(a^n)^2$ is. \par
The Euclidean case ($n=6$) and the hyperbolic case ($n>6$) are essentially the same. The group generated by rotations about two different points is a subgroup of the orientation preserving isometries. In the Euclidean case this is the group $E^+(2)=O(2)\ltimes T(2)$, where $T(2)$ denotes the group of translations of $\bBR ^2$. In the hyperbolic case the group is $PSL(2,\bBR )$. In both cases the topology of these groups is that of $S^1\times \bBR^2$ and the universal covering space is topologically $\bBR\times \bBR ^2$. We have a short exact sequence 
$$
1\longrightarrow \Z\longrightarrow \overline {PSL(2,\bBR)}\longrightarrow PSL(2,\bBR)\longrightarrow 1
$$
and a similar one for $E^+(2)$. In both cases the covering map $\overline D_n \rightarrow D(2,3,n)$ obtained by imposing the additional relation $a^n=\mathbf 1$ is a restriction of the universal covering maps just described. In particular $a^n$ has infinite order.
\end{proof}
In the proof of the last proposition we used the fact that the fundamental group of $SO(3)$ is $\Zz$ to ascertain that $\overline D_n$ is a double cover of $D(2,3,n)$ when $n=3,4,5$. This connection between $\pi_1(SO(3))$ and the groups $\overline D_n$ can be used in the opposite direction. For any one of the three subgroups of $SO(3)$ -- rotational tetrahedral, octahedral, or icosahedral group -- one can study the group $\overline D_n$, which is a group of homotopy classes of paths in $SO(3)$ starting at the identity and ending at an element of the respective polyhedral group. Then one shows that the central element $a^n=b^n$, which is the class of closed paths in $SO(3)$,  has order two and $\overline D_n$ is indeed a double (Schur) cover of the respective rotational polyhedral group. This approach has been pursued in detail in \cite{Ina} for the octahedral case, where the relations for $\overline D_4$ are derived by considering contractible closed paths. In fact the scheme works for $SO(m)$ with any $m\ge 3$. In all cases we get a group $\overline D_4$ which is a factor group of the braid group with $m$ strands $B_m$ obtained by imposing the single additional relation $ab^2a=b^2$ for the first two generators.\par
It may be instructive to see why the relations for $\overline D_n$ imply that $a^n$ has order two when $n=3,4,5$. Using $ab^2=b^{n-2}a^{-1}$ multiple times we can write
$$
a^nb^2=a^{n-1}b^{n-2}a^{-1}=a^{n-1}b^2aa^{-1}b^{n-4}a^{-1}=...=b^{n-2}a^{-1}b^{n-4}a^{-1}b^{n-4}\cdots a^{-1} \ .
$$
As $a^n$ is central, this implies
$$
a^n=(b^{n-4}a^{-1})^n\ .
$$
For $n=3$ this reads $a^3=(b^{-1}a^{-1})^3=(ab)^{-3}$. But we also know that $a^3=(ab)^3$, so $a^6=\mathbf 1$. For $n=4$ the formula implies $a^4=(a^{-1})^4$ and obviously $a^8=\mathbf 1$. For $n=5$ we have $a^5=(ba^{-1})^5$. Because of the symmetry between $a$ and $b$ we also have $b^5=(ab^{-1})^5$ and therefore $a^{10}=a^5b^5=\mathbf 1$.
\par
\section{Finite Simple Factors}
The smallest finite simple nonabelian group is the alternating group $A_5$ of order 60. It is isomorphic to the rotational icosahedral group, the Von Dyck group $D(2,3,5)$ and to  the Chevalley groups $A_1(4)$ and $A_1(5)$. Recall that the Chevalley groups are defined as $A_n(q):=PSL(n+1,q)$, i.e. the groups of $(n+1)\times (n+1)$ matrices over the finite field with $q$ elements, having unit determinant, modulo their center. The Chevalley groups form one of the infinite series of finite simple groups except the cases $A_1(2)$ and $A_1(3)$. 
All examples that we have found, except for the Janko group $J_2$, fall in the series $A_1(q)$ and we will adopt this notation henceforth. Our approach provides the following presentation:
$$
A_1(5)\cong \left< a,b\,|\,aba=bab, ab^2a=b^3, a^5=\mathbf 1\,\right>\ .
$$
This is, just as the standard one, a presentation with two generators and three relations and is therefore an {\it efficient presentation}. Recall that a presentation of a finite group is called {\it efficient} (or {\it balanced}) if the difference between the number of relations and the number of generators is equal to the rank of the Schur multiplier group. If a finite group has a nontrivial Schur multiplier it admits a central (Schur) extension, which is also finite. The minimal possible number of relations for the Schur extension is equal to the number of generators for otherwise the group would be infinite. Then, if the Schur multiplier is nontrivial, one needs an additional relation for each of its generators to recover the original group. In our case this is the relation $a^5=\mathbf 1$.  The Schur multiplier of $D(2,3,5)$ is $\Zz$ (rank 1) and therefore the presentation is efficient. The Schur extension is the double cover $\overline{A_1(5))}$ (the binary icosahedral group) with (efficient) presentation
$$
\overline{A_1(5)}\cong \left< a,b\,|\,aba=bab, ab^2a=b^3\,\right>\ .
$$
\par
The next finite simple group is the group $A_1(7)$ of order $168$. It is a Hurwitz group, i.e. a finite factor of the Von Dyck group $D(2,3,7)$. It is the group of automorphisms (orientation-preserving isometries) of a compact Riemann surface of genus 3 known as the Klein quartic. The fact that $A_1(7)$ is a factor of $D(2,3,7)$ has the following geometric interpretation: The hyperbolic plane is tessellated by equilateral geodesic triangles with angles $2\pi/7$. There is a discrete group of motions of the hyperbolic plane preserving this tessellation, which plays the role of a Fuchsian group determining the Klein quartic. Its automorphism group is the factor group of the normalizer in $PSL(2,\bBR)$ of this Fuchsian group by the Fuchsian group itself.\par
Hurwitz groups have been the subject of considerable interest (see, e.g., \cite{Conder}, \cite{Macbeth} for overviews of known facts) for many reasons. Among them is the result of Hurwitz that the biggest automorphism group of a compact Riemann surface of given genus is a Hurwitz group. Also, Hurwitz groups are quite abundant in the Chevalley series $A_1(q)$ and many other finite simple groups, including the Monster group, are known to be of this type. \par
Some properties of Hurwitz groups are shared by other groups $G$ which are finite factors of Von Dyck groups of type $D(2,3,n), \ n>6$. In particular we have the following generalization of a result described in \cite{Conder}: \\
Suppose that $\{2,3,n\}$ are pairwise coprime. Then\\
(a) The order of $G$ is divisible by $12n$.\\
(b) $G$ is perfect and it has a maximal normal subgroup $K$ such that $G/K$ is a nonabelian simple group.\par
We show (a) through a simple geometric argument. The Von Dyck group $D(2,3,n)$ acts on the hyperbolic plane preserving the tessellation by equilateral geodesic triangles with angles $2\pi/n$. The action is transitive on the set of triangles and further a given triangle can be mapped to any other in three different ways (cyclic permutations of the three ordered vertices). These properties are inherited by the compact Riemann surface when we factor by the action of a normal subgroup of  $D(2,3,n)$. In particular the tessellated surface becomes a generalized regular polyhedron whose faces are the regular triangles. Since there are $n$ triangles meeting at a vertex we have for the number of vertices $V$ and number of faces $F$ the formula $V=3F/n$ and  the relation between number of edges $E$ and number of faces is $E=3F/2$. We see that the Euler characteristic of the surface is $\chi=F(6-n)/2n$. Using the fact that $|G|=3F$ and expressing the Euler characteristic in terms of the genus we obtain
\begin{equation}
\label{genus}
g=1+\frac{|G|(n-6)}{12n}\ .
\end{equation}
Turning this formula around shows that $|G|$ is divisible by $12n$, since $n-6$ has no common factors with $12$ or with $n$.\par
The proof of (b) follows precisely the one in \cite{Conder}, namely it is almost immediate that the abelianization of $G$ is trivial and also any factor group of $G$ is also a factor group of $D(2,3,n)$ and therefore shares the same properties as $G$.\par
Our approach to finding efficient presentations to a class of finite simple groups (and their covers in some cases) is unsophisticated -- we consider the groups $\overline D_n:=\left< a,b\  |\ aba=bab,\,ab^2a=b^{n-2} \right> $ for different values of $n\ge 7$ and look for a single additional relation $W=W(a,b,a^{-1},b^{-1})=\mathbf 1$ that will produce a finite group. We must choose an element $W$ such that the normal closure of the subgroup generated by it acts properly discontinuously on the hyperbolic plane and is therefore a Fuchsian group. When $n=7$ we obtain in this way Hurwitz groups or their double cover. In the former case the order of the generators $a$ and $b$ turns out to be $7$, in the latter it is $14$. Of course not all Hurwitz groups obtained in this way are simple but we choose to omit the non-simple ones. A similar approach is used for other values of $n$. When $n$ is coprime with $2$ and $3$ the results mimic those for $n=7$; when $n$ is divisible by $2$ or $3$ the finite groups obtained are often cyclic.\par
The calculations were done using the computer system GAP -- Groups, Algorithms, Programming \cite{GAP} and essentially trying out different relations $W=\mathbf 1$ for which the GAP algorithm for enumerating the elements of $G$  would terminate in a reasonable amount of time and without memory overflow. If the order of $G$ coincided with that of a simple group, we used the``IsSimple'' routine to establish that $G$ is indeed the desired group. Unfortunately the GAP algorithm for checking simplicity is much less efficient than the one calculating the order of $G$. For small $n$ but large $|G|$ it took between several seconds and several weeks on a workstation with CPUs at 3.28 GHz for a calculation to finish. For example it took between one and two weeks to check simplicity of the group of order 604800 (the Janko group $J_2$). In some cases the calculations did not conclude by the time of the writing of this paper. For larger $n$ GAP would return an error message "the coset enumeration has defined more than ..... cosets" and all attempts to continue with a larger limit or without a limit did not work, possibly due to a bug in the algorithm.\par
In the following section we list the presentations that we have found. The ones marked with an asterisk are those for which simplicity has not been confirmed due to the limitations mentioned above. We have checked that all the groups are perfect. Still, the identification with the respective simple group should be treated as (very plausible) conjectures. (It may be mentioned that in all cases where the group was shown to be perfect and then the calculation for simplicity came to completion GAP gave response ``true''.)\par
The cases in which $G=\left< a,b\  |\ aba=bab,\,ab^2a=b^{n-2},\,W=\mathbf 1 \right>$ is a Schur cover of a simple group or is a simple group with trivial Schur multiplier this is not an efficient presentation and we have tried to find a presentation of the form $G=\left< a,b\  |\ aba=bab,\,ab^2a=W^{- 1}b^{n-2}\, \right>$. There are two groups for which the presentations found are not efficient. They are marked by a dagger.\par
We observe that a group $A_1(q)$ may occur as a factor group of $D(2,3,n)$ for several different values of $n$. Since the normal closure in $D(2,3,n)$ of the subgroup generated by $W$ is a Fuchsian group, the group $A_1(q)$ appears as an automorphism group of the corresponding compact Riemann surface and thus we may have several different surfaces with the same automorphism group. The genera of the surfaces are calculated according to Equation \ref{genus}.\par
Finally, it is interesting that for $p$ prime we have been able to find a presentation of $A_1(p)$ as a factor of $D(2,3,p)$ for all $7\le p \le 47$ and this is always the smallest factor of $D(2,3,p)$. It is natural to ask if this is true for any prime $p\ge 7$.
\par
\section {Results}
\begin{scriptsize}

\begin{center} \underline{$\mathbf{n=7}$ (Hurwitz groups)} \end{center}\par
$\bullet$ {\bf order 168} 
\begin{eqnarray}
A_1(7)&\cong& \left< a,b\  |\ aba=bab,\,ab^2a=b^5,\,(ab^{-1})^4=\mathbf 1 \right>\nonumber\\ 
& \cong &\left< a,b\  |\ aba=bab,\,ab^2a=b^5,\,(a^4b^{-3})^2=\mathbf 1 \right>\nonumber\\
&\cong& \left< a,b\  |\ aba=bab,\,ab^2a=(a^2b^{-2})^{\pm 3}b^5,\,a^7=\mathbf 1 \right>\nonumber
\end{eqnarray}
\par\bigskip
$\bullet$ {\bf order 336} \quad(Not simple - Schur cover)
\begin{eqnarray}
\overline{A_1(7)}&\cong& \left< a,b\  |\ aba=bab,\,ab^2a=(a^2b^{-2})^{\pm 3}b^5\, \right>\nonumber
\end{eqnarray}
\par\bigskip
$\bullet$ {\bf order 504} 
\begin{eqnarray}
{A_1(8)}&\cong& \left< a,b\  |\ aba=bab,\,ab^2a=(a^2b^{-2}ab^{-2})^{2}b^5\, \right>\nonumber\\
&\cong& \left< a,b\  |\ aba=bab,\,ab^2a=((ab^{-})^3 ab^{-2})^{2}b^5\, \right>\nonumber
\end{eqnarray}
\par\bigskip
$\bullet$ {\bf order 1092} 
\begin{eqnarray}
{A_1(13)}&\cong& \left< a,b\  |\ aba=bab,\,ab^2a=b^5,\, (ab^{-1})^6=\mathbf 1\,\right>\nonumber\\
&\cong& \left< a,b\  |\ aba=bab,\,ab^2a=b^5,\, (a^3b^{-4})^3=\mathbf 1\,\right>\nonumber\\
&\cong& \left< a,b\  |\ aba=bab,\,ab^2a=b^5,\, (ab^{-1}a^2b^{-2}a^3b^{-3})^2=\mathbf 1\,\right>\nonumber\\
&\cong& \left< a,b\  |\ aba=bab,\,ab^2a=(a^3b^{-3})^{\pm 3}b^5,\, a^7=\mathbf 1\,\right>\nonumber
\end{eqnarray}
\par\bigskip
$\bullet$ {\bf order 2184} \quad(Not simple - Schur cover)
\begin{eqnarray}
\overline{A_1(13)}&\cong& \left< a,b\  |\ aba=bab,\,ab^2a=(a^3b^{-3})^{\pm 3}b^5\,\right>\nonumber
\end{eqnarray}
\par\bigskip
$\bullet$ {\bf order 9828} 
\begin{eqnarray}
{A_1(27)}&\cong& \left< a,b\  |\ aba=bab,\,ab^2a=b^5,\, ((ab^{-1})^5b^{-1})^3=\mathbf 1\, \right>\nonumber\\
&\cong& \left< a,b\  |\ aba=bab,\,ab^2a=b^5,\, ((ab^{-1})^3b^{-1}a^3b^{-2})^3=\mathbf 1\, \right>\nonumber\\
&\cong& \left< a,b\  |\ aba=bab,\,ab^2a=b^5,\, ((ab^{-1})^3b^{-1}(ab^{-1})^2a)^2=\mathbf 1\, \right>\nonumber
\end{eqnarray}
 \par\bigskip
$\bullet$ {\bf order 12180} 
\begin{eqnarray}
{A_1(29)}&\cong& \left< a,b\  |\ aba=bab,\,ab^2a=b^5,\, ((ab^{-1})^4b^{-1})^3=\mathbf 1\, \right>\nonumber\\
&\cong& \left< a,b\  |\ aba=bab,\,ab^2a=b^5,\, (ab^{-1}a^2b^{-2}a^3b^{-2}a^2b^{-1})^2=\mathbf 1\, \right>\nonumber\\
&\cong& \left< a,b\  |\ aba=bab,\,ab^2a=(a^2b^{-2})^{\pm 5}b^5,\, a^7=\mathbf 1\, \right>\nonumber\\
&\cong& \left< a,b\  |\ aba=bab,\,ab^2a=(a^3b^{-3})^{\pm 5}b^5,\, a^7=\mathbf 1\, \right>\nonumber\\
&\cong& \left< a,b\  |\ aba=bab,\,ab^2a=(a^4b^{-4})^{\pm 5}b^5,\, a^7=\mathbf 1\, \right>\nonumber
\end{eqnarray}
\par\bigskip
$\bullet$ {\bf order 24360} \quad(Not simple - Schur cover)
\begin{eqnarray}
\overline{A_1(29)}&\cong& \left< a,b\  |\ aba=bab,\,ab^2a=(a^2b^{-2})^{\pm 5}b^5\, \right>\nonumber\\
&\cong& \left< a,b\  |\ aba=bab,\,ab^2a=(a^3b^{-3})^{\pm 5}b^5\, \right>\nonumber\\
&\cong& \left< a,b\  |\ aba=bab,\,ab^2a=(a^4b^{-4})^{ 5}b^5\, \right>\nonumber
\end{eqnarray}
\par\bigskip
$\bullet$ {\bf order 34440} 
\begin{eqnarray}
{A_1(41)}&\cong& \left< a,b\  |\ aba=bab,\,ab^2a=b^5,\, (a^2b^{-1}ab^{-2})^4=\mathbf 1\, \right>\nonumber\\
&\cong& \left< a,b\  |\ aba=bab,\,ab^2a=b^5,\, (a^2b^{-2}ab^{-3})^4=\mathbf 1\, \right>\nonumber\\
&\cong& \left< a,b\  |\ aba=bab,\,ab^2a=b^5,\, (ab^{-1}ab^{-2}ab^{-3})^4=\mathbf 1\, \right>\nonumber
\end{eqnarray}
\par\bigskip
$\bullet$ {\bf order 39732} 
\begin{eqnarray}
{A_1(43)}&\cong& \left< a,b\  |\ aba=bab,\,ab^2a=b^5,\, (ab^{-1}a^2b^{-2}a^3b^{-3})^3=\mathbf 1\, \right>\nonumber\\
&\cong& \left< a,b\  |\ aba=bab,\,ab^2a=b^5,\, ((ab^{-2})^4a^2b^{-1}ab^{-1})^2=\mathbf 1\, \right>\nonumber\\
&\cong& \left< a,b\  |\ aba=bab,\,ab^2a=(a^2b^{-1}ab^{-2}ab^{-1})^{\pm3}b^5,\, a^7=\mathbf 1\, \right>\nonumber
\end{eqnarray}
\par\bigskip
$\bullet$ {\bf order 79464} \quad(Not simple - Schur cover)
\begin{eqnarray}
\overline{A_1(43)}&\cong& \left< a,b\  |\ aba=bab,\,ab^2a=(a^2b^{-1}ab^{-2}ab^{-1})^{\pm3}b^5\, \right>\nonumber
\end{eqnarray}
\par\bigskip
$\bullet$ {\bf order 178920} 
\begin{eqnarray}
{A_1(71)}&\cong& \left< a,b\  |\ aba=bab,\,ab^2a=b^5,\, ((ab^{-1})^3b^{-1})^4=\mathbf 1\, \right>\nonumber\\
&\cong& \left< a,b\  |\ aba=bab,\,ab^2a=b^5,\, ((a^2b^{-2})^2b^{-2})^4=\mathbf 1\, \right>\nonumber\\
&\cong& \left< a,b\  |\ aba=bab,\,ab^2a=b^5,\, ((ab^{-1})^5a^4b^{-3})^2=\mathbf 1\, \right>\nonumber\\
&\cong& \left< a,b\  |\ aba=bab,\,ab^2a=b^5,\, ((a^3b^{-3})^2(ab^{-1})^2b^{-2})^2=\mathbf 1\, \right>\nonumber
\end{eqnarray}
\par\bigskip
$\bullet$ {\bf order 285852} 
\begin{eqnarray}
{A_1(83)}&\cong& \left< a,b\  |\ aba=bab,\,ab^2a=b^5,\, ((ab^{-1})^5b^{-1}(ab^{-1})^2a)^2=\mathbf 1\, \right>\quad ^*\nonumber\\
&\cong& \left< a,b\  |\ aba=bab,\,ab^2a=b^5,\, ((ab^{-3})^2(a^2b^{-1})^2(ab^{-1})^2b^{-1}a)^2=\mathbf 1\, \right>\quad ^*\nonumber\\
&\cong& \left< a,b\  |\ aba=bab,\,ab^2a=b^5,\, ((a^2b^{-2})^3b^{-1}a^2b^{-3})^2=\mathbf 1\, \right>\quad ^*\nonumber\\
&\cong& \left< a,b\  |\ aba=bab,\,ab^2a=b^5,\, ((a^3b^{-1})^3(a^2b^{-2})^3(ab^{-1})^3)^2=\mathbf 1\, \right>\quad ^* \nonumber
\end{eqnarray}
\par\bigskip
$\bullet$ {\bf order 456288} 
\begin{eqnarray}
{A_1(97)}&\cong& \left< a,b\  |\ aba=bab,\,ab^2a=b^5,\, (a^2b^{-2}ab^{-2}(ab^{-1})^2a^2b^{-3}ab^{-1})^2=\mathbf 1\, \right>\quad ^*\nonumber\\
&\cong& \left< a,b\  |\ aba=bab,\,ab^2a=b^5,\, ((ab^{-2})^2(a^3b^{-1})^3(a^2b^{-2})^2)^2=\mathbf 1\, \right>\quad ^*\nonumber\\
&\cong& \left< a,b\  |\ aba=bab,\,ab^2a=b^5,\, (a^2b^{-2}(ab^{-1})^2(a^2b^{-3})^2a^3b^{-1})^2=\mathbf 1\, \right>\quad ^*\nonumber
\end{eqnarray}
\par\bigskip
$\bullet$ {\bf order 604800} 
\begin{eqnarray}
{J_2}&\cong& \left< a,b\  |\ aba=bab,\,ab^2a=b^5,\, (ab^{-1}ab^{-3}a^3b^{-1})^{\pm3}a^7=\mathbf 1\, \right>\nonumber\\
&\cong& \left< a,b\  |\ aba=bab,\,ab^2a=(ab^{-1}ab^{-3}a^3b^{-1})^{\pm3}b^5,\, a^7=\mathbf 1\, \right>\nonumber
\end{eqnarray}
\par\bigskip
$\bullet$ {\bf order 1209600} \quad(Not simple - Schur cover)
\begin{eqnarray}
\overline{J_2}
&\cong& \left< a,b\  |\ aba=bab,\,ab^2a=(ab^{-1}ab^{-3}a^3b^{-1})^3b^5\, \right>\nonumber
\end{eqnarray}
\par\bigskip
$\bullet$ {\bf order 721392} 
\begin{eqnarray}
{A_1(113)}&\cong& \left< a,b\  |\ aba=bab,\,ab^2a=b^5,\, ((ab^{-1})^3b^{-1}(a^2b^{-1})^4b^{-1})^2=\mathbf 1\, \right>\quad ^*\nonumber\\
&\cong& \left< a,b\  |\ aba=bab,\,ab^2a=b^5,\, ((ab^{-1})^5b^{-1}(ab^{-1})^3b^{-1})^2=\mathbf 1\, \right>\quad ^*\nonumber\\
&\cong& \left< a,b\  |\ aba=bab,\,ab^2a=b^5,\, ((a^3b^{-2})^2(a^2b^{-2})^2(ab^{-1})^3)^2=\mathbf 1\, \right>\quad ^*\nonumber
\end{eqnarray}
\par\bigskip
$\bullet$ {\bf order 976500} 
\begin{eqnarray}
{A_1(125)}&\cong& \left< a,b\  |\ aba=bab,\,ab^2a=b^5,\, ((ab^{-3})^4(a^2b^{-2})^2 (a^3b^{-1})^4)^2=\mathbf 1\, \right>\quad ^*\nonumber
\end{eqnarray}
\par\bigskip
$\bullet$ {\bf order 1024128} 
\begin{eqnarray}
{A_1(127)}&\cong& \left< a,b\  |\ aba=bab,\,ab^2a=b^5,\, (ab^{-1}(a^2b^{-3})^3(a^3b^{-1})^2)^2=\mathbf 1\, \right>\nonumber
\end{eqnarray}
\par\bigskip
$\bullet$ {\bf order 1342740} 
\begin{eqnarray}
{A_1(139)}&\cong& \left< a,b\  |\ aba=bab,\,ab^2a=b^5,\, ((a^2b^{-2})^6b)^2=\mathbf 1\, \right>\quad ^*\nonumber\\
&\cong& \left< a,b\  |\ aba=bab,\,ab^2a=b^5,\, (ab^{-2}(a^2b^{-2})^5)^2=\mathbf 1\, \right>\quad ^*\nonumber\\
&\cong& \left< a,b\  |\ aba=bab,\,ab^2a=b^5,\, (a^2b^{-2}a(ab^{-1})^6b^{-2})^2=\mathbf 1\, \right>\quad ^*\nonumber
\end{eqnarray}
\par\bigskip
$\bullet$ {\bf order 2328648} 
\begin{eqnarray}
{A_1(167)}&\cong& \left< a,b\  |\ aba=bab,\,ab^2a=b^5,\, ((a^3b^{-1})^3(ab^{-1})^7)^2=\mathbf 1\, \right>\quad ^*\nonumber\\
&\cong& \left< a,b\  |\ aba=bab,\,ab^2a=b^5,\, ((a^2b^{-2})^2ab^{-1}(a^2b^{-3})^2(a^3b^{-1})^2)^2=\mathbf 1\, \right>\quad ^*\nonumber
\end{eqnarray}
\par\bigskip
$\bullet$ {\bf order 5544672} 
\begin{eqnarray}
{A_1(223)}
&\cong& \left< a,b\  |\ aba=bab,\,ab^2a=b^5,\, ((a^2b^{-2})^3(a^2b^{-3})^2(a^3b^{-1})^2)^2=\mathbf 1\, \right>\quad ^*\nonumber
\end{eqnarray}
\par\bigskip
$\bullet$ {\bf order 7906500} 
\begin{eqnarray}
{A_1(251)}
&\cong& \left< a,b\  |\ aba=bab,\,ab^2a=b^5,\, ((ab^{-1})^3ab^{-2}(a^3b^{-2})^3)^2=\mathbf 1\, \right>\quad ^*\nonumber
\end{eqnarray}
\par\bigskip

\begin{center} \underline{$\mathbf{n=8}$} \end{center}\par
$\bullet$ {\bf order 2448} 
\begin{eqnarray}
{A_1(17)}
&\cong& \left< a,b\  |\ aba=bab,\,ab^2a=(a^2b^{-3})^{-3}b^6,\, a^8=\mathbf 1\, \right>\nonumber
\end{eqnarray}
\par\bigskip
$\bullet$ {\bf order 4896} \quad(Not simple - Schur cover)
\begin{eqnarray}
\overline{A_1(17)}
&\cong& \left< a,b\  |\ aba=bab,\,ab^2a=(a^2b^{-3})^{-3}b^6\, \right>\nonumber
\end{eqnarray}
\par\bigskip
$\bullet$ {\bf order 14880} 
\begin{eqnarray}
{A_1(31)}
&\cong& \left< a,b\  |\ aba=bab,\,ab^2a=(ab^{-2})^{-5}a^{-8}b^6,\, a^8=\mathbf 1\, \right>\nonumber
\end{eqnarray}
\par\bigskip
$\bullet$ {\bf order 29760} \quad(Not simple - Schur cover)
\begin{eqnarray}
\overline{A_1(31)}
&\cong& \left< a,b\  |\ aba=bab,\,ab^2a=(ab^{-2})^{-5}a^{-8}b^6\, \right>\nonumber
\end{eqnarray}
\par\bigskip
$\bullet$ {\bf order 51888} 
\begin{eqnarray}
{A_1(47)}
&\cong& \left< a,b\  |\ aba=bab,\,ab^2a=(ab^{-2}a^2b^{-2})^{-3}b^6,\, a^8=\mathbf 1\, \right>\nonumber
\end{eqnarray}
\par\bigskip
$\bullet$ {\bf order 721392} 
\begin{eqnarray}
{A_1(113)}
&\cong& \left< a,b\  |\ aba=bab,\,ab^2a=(a^2b^{-2}a^2b^{-3})^{\pm3}b^6,\, a^8=\mathbf 1\, \right>\quad ^*\nonumber
\end{eqnarray}
\par\bigskip

\begin{center} \underline{$\mathbf{n=9}$} \end{center}\par
$\bullet$ {\bf order 504} 
\begin{eqnarray}
{A_1(8)}
&\cong& \left< a,b\  |\ aba=bab,\,ab^2a=((ab^{-1})^3b^{-1})^2b^7\, \right>\nonumber\\
&\cong& \left< a,b\  |\ aba=bab,\,ab^2a=(a^3b^{-4})^2b^7\, \right>\nonumber\\
&\cong& \left< a,b\  |\ aba=bab,\,ab^2a=(ab^{-1}ab^{-3})^2b^7\, \right>\nonumber
\end{eqnarray}
 \par\bigskip
$\bullet$ {\bf order 2448} 
\begin{eqnarray}
{A_1(17)}
&\cong& \left< a,b\  |\ aba=bab,\,ab^2a=b^7,\, (ab^{-2})^4=\mathbf 1 \,\right>\nonumber\\
&\cong& \left< a,b\  |\ aba=bab,\,ab^2a=b^7,\, (ab^{-3})^4=\mathbf 1 \,\right>\nonumber
\end{eqnarray}
\par\bigskip
$\bullet$ {\bf order 3420} 
\begin{eqnarray}
{A_1(19)}
&\cong& \left< a,b\  |\ aba=bab,\,ab^2a=b^7,\, (a^3b^{-3}ab^{-3})^2=\mathbf 1 \,\right>\nonumber
\end{eqnarray}
\par\bigskip
$\bullet$ {\bf order 25308} 
\begin{eqnarray}
{A_1(37)}
&\cong& \left< a,b\  |\ aba=bab,\,ab^2a=b^7,\, (a^2b^{-2}a^2b^{-3})^2=\mathbf 1 \,\right>\nonumber\\
&\cong& \left< a,b\  |\ aba=bab,\,ab^2a=b^7,\, (ab^{-1}ab^{-2}a^2b^{-3})^2=\mathbf 1 \,\right>\nonumber
\end{eqnarray}
\par\bigskip
$\bullet$ {\bf order 178920} 
\begin{eqnarray}
{A_1(71)}
&\cong& \left< a,b\  |\ aba=bab,\,ab^2a=b^7,\, (a^2b^{-2}a^2b^{-2}a^2b^{-6})^2=\mathbf 1 \,\right>\quad ^*\nonumber
\end{eqnarray}
\par\bigskip

\begin{center} \underline{$\mathbf{n=10}$} \end{center}\par
$\bullet$ {\bf order 60} 
\begin{eqnarray}
{A_1(5)}
&\cong& \left< a,b\  |\ aba=bab,\,ab^2a=b^8,\, (ab^{-2})^3=\mathbf 1 \,\right>\nonumber\\&\cong& \left< a,b\  |\ aba=bab,\,ab^2a=(ab^{-2})^{3}b^8,\, a^5=\mathbf 1 \,\right>\nonumber
\end{eqnarray}
Note: The first presentation exhibits $A_1(5)$ as a factor of $D(2,3,10)$. Even so, the order of $a$ and $b$ turns out to be $5$, so the Riemann surface is the sphere triangulated by $20$ equilateral geodesic triangles (a spherical icosahedron).\par\bigskip
$\bullet$ {\bf order 120} \quad(Not simple - Schur cover)
\begin{eqnarray}
\overline{A_1(5)}
&\cong& \left< a,b\  |\ aba=bab,\,ab^2a=(ab^{-2})^{3}b^8=\mathbf 1 \,\right>\nonumber
\end{eqnarray}
\par\bigskip
$\bullet$ {\bf order 3420} 
\begin{eqnarray}
{A_1(19)}
&\cong& \left< a,b\  |\ aba=bab,\,ab^2a=(a^2b^{-3})^3b^8,\, a^{10}=\mathbf 1 \,\right>\nonumber
\end{eqnarray}
 \par\bigskip
$\bullet$ {\bf order 6840} \quad(Not simple - Schur cover)
\begin{eqnarray}
\overline{A_1(19)}
&\cong& \left< a,b\  |\ aba=bab,\,ab^2a=(a^2b^{-3})^3b^8 \,\right>\nonumber
\end{eqnarray}
 \par\bigskip
$\bullet$ {\bf order 34440} 
\begin{eqnarray}
{A_1(41)}
&\cong& \left< a,b\  |\ aba=bab,\,ab^2a=(ab^{-1}ab^{-2})^{\pm3}b^8,\, a^{10}=\mathbf 1 \,\right>\nonumber
\end{eqnarray}
\par\bigskip
$\bullet$ {\bf order 68880} \quad(Not simple - Schur cover)
\begin{eqnarray}
\overline{A_1(41)}
&\cong& \left< a,b\  |\ aba=bab,\,ab^2a=(ab^{-1}ab^{-2})^{\pm3}b^8=\mathbf 1 \,\right>\nonumber
\end{eqnarray}
 \par\bigskip
$\bullet$ {\bf order 102660} 
\begin{eqnarray}
{A_1(59)}
&\cong& \left< a,b\  |\ aba=bab,\,ab^2a=b^8,\, (a^2b^{-1}a^3b^{-1})^3=\mathbf 1, \, a^{10}=\mathbf 1 \,\right>\quad ^{*\ \dagger}\nonumber
\end{eqnarray}
\par\bigskip
$\bullet$ {\bf order 205320} \quad(Not simple - Schur cover)
\begin{eqnarray}
\overline{A_1(59)}
&\cong& \left< a,b\  |\ aba=bab,\,ab^2a=b^8,\, (a^2b^{-1}a^3b^{-1})^3=\mathbf 1 \,\right>\quad ^{*\ \dagger}\nonumber
\end{eqnarray}
 \par\bigskip

\begin{center} \underline{$\mathbf{n=11}$} \end{center}\par
$\bullet$ {\bf order 660} 
\begin{eqnarray}
{A_1(11)}
&\cong& \left< a,b\  |\ aba=bab,\,ab^2a=b^9,\, (ab^{-1}a^2b^{-1})^3=\mathbf 1 \,\right>\nonumber\\
&\cong& \left< a,b\  |\ aba=bab,\,ab^2a=b^9,\, (a^4b^{-5})^2=\mathbf 1 \,\right>\nonumber\\
&\cong& \left< a,b\  |\ aba=bab,\,ab^2a=b^9,\, (ab^{-2}a^2b^{-2})^2=\mathbf 1 \,\right>\nonumber\\
&\cong& \left< a,b\  |\ aba=bab,\,ab^2a=(a^4b^{-2})^{-3}b^9,\, a^{11}=\mathbf 1 \,\right>\nonumber
\end{eqnarray}
\par\bigskip
$\bullet$ {\bf order 1320} \quad(Not simple - Schur cover)
\begin{eqnarray}
\overline{A_1(11)}
&\cong& \left< a,b\  |\ aba=bab,\,ab^2a=(a^4b^{-2})^{-3}b^9 \,\right>\nonumber\\
&\cong& \left< a,b\  |\ aba=bab,\,ab^2a=(a^2b^{-4})^{3}b^9 \,\right>\nonumber
\end{eqnarray}
 \par\bigskip
$\bullet$ {\bf order 6072} 
\begin{eqnarray}
{A_1(23)}
&\cong& \left< a,b\  |\ aba=bab,\,ab^2a=b^9,\, (a^4b^{-2})^4=\mathbf 1 \,\right>\nonumber\\
&\cong& \left< a,b\  |\ aba=bab,\,ab^2a=b^9,\, (a^2b^{-1}ab^{-3})^3=\mathbf 1 \,\right>\nonumber\\
&\cong& \left< a,b\  |\ aba=bab,\,ab^2a=(ab^{-1}ab^{-3})^3b^9,\, a^{11}=\mathbf 1 \,\right>\nonumber
\end{eqnarray}
\par\bigskip
$\bullet$ {\bf order 12144} \quad(Not simple - Schur cover)
\begin{eqnarray}
\overline{A_1(23)}
&\cong& \left< a,b\  |\ aba=bab,\,ab^2a=(ab^{-1}ab^{-3})^3b^9 \,\right>\nonumber
\end{eqnarray}
\par\bigskip
$\bullet$ {\bf order 32736} 
\begin{eqnarray}
{A_1(32)}
&\cong& \left< a,b\  |\ aba=bab,\,ab^2a=b^9,\, (a^4b^{-2}a^4b^{-1})^2a^{11}=\mathbf 1 \,\right>\quad ^{*\ \dagger}\nonumber
\end{eqnarray}
\par\bigskip
$\bullet$ {\bf order 39732} 
\begin{eqnarray}
{A_1(43)}
&\cong& \left< a,b\  |\ aba=bab,\,ab^2a=b^9,\, (a^2b^{-2})^3a^{11}=\mathbf 1 \,\right>\nonumber\\
&\cong& \left< a,b\  |\ aba=bab,\,ab^2a=b^9,\, ((ab^{-2})^3ab^{-1})^2=\mathbf 1 \,\right>\nonumber\\
&\cong& \left< a,b\  |\ aba=bab,\,ab^2a=b^9,\, ((ab^{-1})^3b^{-1}ab^{-2})^2=\mathbf 1 \,\right>\nonumber
\end{eqnarray}
\par\bigskip
$\bullet$ {\bf order 150348} 
\begin{eqnarray}
{A_1(67)}
&\cong& \left< a,b\  |\ aba=bab,\,ab^2a=b^9,\, ((ab^{-1})^3b^{-1}a^2b^{-2})^2=\mathbf 1 \,\right>\nonumber
\end{eqnarray}
 \par\bigskip
$\bullet$ {\bf order 352440} 
\begin{eqnarray}
{A_1(89)}
&\cong& \left< a,b\  |\ aba=bab,\,ab^2a=b^9,\,  (a^4b^{-3}a^4b^{-1})^2=\mathbf 1 \,\right> \nonumber
\end{eqnarray}
\par\bigskip

\begin{center} \underline{$\mathbf{n=13}$} \end{center}\par
$\bullet$ {\bf order 1092} 
\begin{eqnarray}
{A_1(13)}
&\cong& \left< a,b\  |\ aba=bab,\,ab^2a=b^{11},\,  (a^4b^{-6})^2=\mathbf 1 \,\right>\nonumber\\
&\cong& \left< a,b\  |\ aba=bab,\,ab^2a=b^{11},\,  (ab^{-1}a^3b^{-2}ab^{-1})^2=\mathbf 1 \,\right>\nonumber\\
&\cong& \left< a,b\  |\ aba=bab,\,ab^2a=b^{11},\,  (a^3b^{-1}ab^{-1})^2=\mathbf 1 \,\right>\nonumber\\
&\cong& \left< a,b\  |\ aba=bab,\,ab^2a=b^{11},\,  (a^2b^{-1}ab^{-1}ab^{-1})^2=\mathbf 1 \,\right>\nonumber
\end{eqnarray}
\par\bigskip
$\bullet$ {\bf order 7800} 
\begin{eqnarray}
{A_1(25)}
&\cong& \left< a,b\  |\ aba=bab,\,ab^2a=b^{11},\,  (a^3b^{-4})^2=\mathbf 1 \,\right>\nonumber\\
&\cong& \left< a,b\  |\ aba=bab,\,ab^2a=b^{11},\,  (a^3b^{-1}a^2b^{-2})^2=\mathbf 1 \,\right>\nonumber\\
&\cong& \left< a,b\  |\ aba=bab,\,ab^2a=b^{11},\,  (ab^{-1}a^2b^{-2}a^2b^{-1})^2=\mathbf 1 \,\right>\nonumber
\end{eqnarray}
 \par\bigskip
$\bullet$ {\bf order 9828} 
\begin{eqnarray}
{A_1(27)}
&\cong& \left< a,b\  |\ aba=bab,\,ab^2a=b^{11},\,  (a^2b^{-3})^3=\mathbf 1 \,\right>\nonumber
\end{eqnarray}
 \par\bigskip
$\bullet$ {\bf order 74412} 
\begin{eqnarray}
{A_1(53)}
&\cong& \left< a,b\  |\ aba=bab,\,ab^2a=b^{11},\,  (ab^{-1}(ab^{-2})^2)^2=\mathbf 1 \,\right>\nonumber\\
&\cong& \left< a,b\  |\ aba=bab,\,ab^2a=b^{11},\,  (a^3b^{-1}a^3b^{-2})^2=\mathbf 1 \,\right>\nonumber\\
&\cong& \left< a,b\  |\ aba=bab,\,ab^2a=b^{11},\,  (a^2b^{-1}ab^{-2}ab^{-3})^2=\mathbf 1 \,\right>\nonumber
\end{eqnarray}
 \par\bigskip
$\bullet$ {\bf order 246480} 
\begin{eqnarray}
{A_1(79)}
&\cong& \left< a,b\  |\ aba=bab,\,ab^2a=b^{11},\,  (a^2b^{-1})^4=\mathbf 1 \,\right>\nonumber
\end{eqnarray}
\par\bigskip
$\bullet$ {\bf order 546312} 
\begin{eqnarray}
{A_1(101)}
&\cong& \left< a,b\  |\ aba=bab,\,ab^2a=b^{11},\,  (a^2b^{-1})^4=\mathbf 1 \,\right>\quad ^*\nonumber
\end{eqnarray}
\par\bigskip
$\bullet$ {\bf order 2964780} 
\begin{eqnarray}
{A_1(181)}
&\cong& \left< a,b\  |\ aba=bab,\,ab^2a=b^{11},\,   (a^2b^{-1}a^2b^{-4})^2=\mathbf 1 \,\right>\quad ^*\nonumber
\end{eqnarray}
\par\bigskip

\begin{center} \underline{$\mathbf{n=15}$} \end{center}\par
$\bullet$ {\bf order 4080} 
\begin{eqnarray}
{A_1(16)}
&\cong& \left< a,b\  |\ aba=bab,\,ab^2a=(ab^{-2}ab^{-5})^2b^{13}\,\right>\nonumber
\end{eqnarray}
 \par\bigskip
$\bullet$ {\bf order 12180} 
\begin{eqnarray}
{A_1(29)}
&\cong& \left< a,b\  |\ aba=bab,\,ab^2a=b^{13},\,  (a^3b^{-8})^2=\mathbf 1 \,\right>\nonumber\\
&\cong& \left< a,b\  |\ aba=bab,\,ab^2a=b^{13},\,  (ab^{-1}a^3b^{-1})^2=\mathbf 1 \,\right>\nonumber
\end{eqnarray}
\par\bigskip
$\bullet$ {\bf order 14880} 
\begin{eqnarray}
{A_1(31)}
&\cong& \left< a,b\  |\ aba=bab,\,ab^2a=b^{13},\,  (a^4b^{-6})^2=\mathbf 1 \,\right>\nonumber\\
&\cong& \left< a,b\  |\ aba=bab,\,ab^2a=b^{13},\,  (ab^{-2}ab^{-4})^2=\mathbf 1 \,\right>\nonumber
\end{eqnarray}
 \par\bigskip
$\bullet$ {\bf order 102660} 
\begin{eqnarray}
{A_1(59)}
&\cong& \left< a,b\  |\ aba=bab,\,ab^2a=b^{13},\,  (a^3b^{-4})^2=\mathbf 1 \,\right>\nonumber\\
&\cong& \left< a,b\  |\ aba=bab,\,ab^2a=b^{13},\,  (ab^{-2}a^2b^{-2}ab^{-1})^2=\mathbf 1 \,\right>\nonumber
\end{eqnarray}
 \par\bigskip
$\bullet$ {\bf order 113460} 
\begin{eqnarray}
{A_1(61)}
&\cong& \left< a,b\  |\ aba=bab,\,ab^2a=b^{13},\,  (a^4b^{-5})^2=\mathbf 1 \,\right>\nonumber
\end{eqnarray}
 \par\bigskip

\begin{center} \underline{$\mathbf{n=17}$} \end{center}\par
$\bullet$ {\bf order 2448} 
\begin{eqnarray}
{A_1(17)}
&\cong& \left< a,b\  |\ aba=bab,\,ab^2a=b^{15},\,  (a^3b^{-5})^2=\mathbf 1 \,\right>\nonumber\\
&\cong& \left< a,b\  |\ aba=bab,\,ab^2a=b^{15},\,  (ab^{-1}a^3b^{-4})^2=\mathbf 1 \,\right>\nonumber
\end{eqnarray}
 \par\bigskip
$\bullet$ {\bf order 4080} 
\begin{eqnarray}
{A_1(16)}
&\cong& \left< a,b\  |\ aba=bab,\,ab^2a=(a^3b^{-9})^{2}b^{15} \,\right>\nonumber
\end{eqnarray}
\par\bigskip
$\bullet$ {\bf order 150348} 
\begin{eqnarray}
{A_1(67)}
&\cong& \left< a,b\  |\ aba=bab,\,ab^2a=b^{15},\,  (a^3b^{-4})^2=\mathbf 1 \,\right>\quad ^*\nonumber\\
&\cong& \left< a,b\  |\ aba=bab,\,ab^2a=b^{15},\,  (ab^{-2}ab^{-4})^2=\mathbf 1 \,\right> \nonumber
\end{eqnarray}
\par\bigskip
$\bullet$ {\bf order 515100} 
\begin{eqnarray}
{A_1(101)}
&\cong& \left< a,b\  |\ aba=bab,\,ab^2a=b^{15},\,  (ab^{-2}ab^{-5})^2=\mathbf 1 \,\right>\quad ^*\nonumber\\
&\cong& \left< a,b\  |\ aba=bab,\,ab^2a=b^{15},\,  (a^4b^{-10})^2=\mathbf 1 \,\right>\quad ^*\nonumber
\end{eqnarray}
\par\bigskip

\begin{center} \underline{$\mathbf{n=19}$} \end{center}\par
$\bullet$ {\bf order 3420} 
\begin{eqnarray}
{A_1(19)}
&\cong& \left< a,b\  |\ aba=bab,\,ab^2a=b^{17},\,  (a^4b^{-9})^2=\mathbf 1 \,\right>\nonumber\\
&\cong& \left< a,b\  |\ aba=bab,\,ab^2a=b^{17},\,  (a^3b^{-12})^2=\mathbf 1 \,\right>\nonumber\\
&\cong& \left< a,b\  |\ aba=bab,\,ab^2a=b^{17},\,  (ab^{-1}ab^{-5})^2=\mathbf 1 \,\right>\nonumber
\end{eqnarray}
 \par\bigskip
$\bullet$ {\bf order 25308} 
\begin{eqnarray}
{A_1(37)}
&\cong& \left< a,b\  |\ aba=bab,\,ab^2a=b^{17},\,  (a^3b^{-4})^2=\mathbf 1 \,\right>\nonumber\\
&\cong& \left< a,b\  |\ aba=bab,\,ab^2a=b^{17},\,  (a^4b^{-1}a^2b^{-1})^2=\mathbf 1 \,\right>\nonumber
\end{eqnarray}
 \par\bigskip
$\bullet$ {\bf order 721392} 
\begin{eqnarray}
{A_1(113)}
&\cong& \left< a,b\  |\ aba=bab,\,ab^2a=b^{17},\,  (ab^{-2}ab^{-5})^2=\mathbf 1 \,\right>\quad ^*\nonumber\\
&\cong& \left< a,b\  |\ aba=bab,\,ab^2a=b^{17},\,  (a^4b^{-12})^2=\mathbf 1 \,\right>\quad ^*\nonumber
\end{eqnarray}
\par\bigskip
$\bullet$ {\bf order 1721400} 
\begin{eqnarray}
{A_1(151)}
&\cong& \left< a,b\  |\ aba=bab,\,ab^2a=b^{17},\,  (a^3b^{-3})^2a^{38}=\mathbf 1 \,\right>\quad ^*\nonumber
\end{eqnarray}
\par\bigskip
$\bullet$ {\bf order 6004380} 
\begin{eqnarray}
{A_1(229)}
&\cong& \left< a,b\  |\ aba=bab,\,ab^2a=b^{17},\,  (a^2b^{-3}ab^{-2})^2=\mathbf 1 \,\right>\quad ^*\nonumber\\
&\cong& \left< a,b\  |\ aba=bab,\,ab^2a=b^{17},\,  (a^2b^{-2}ab^{-3})^2=\mathbf 1 \,\right>\quad ^*\nonumber
\end{eqnarray}
\par\bigskip

\begin{center} \underline{$\mathbf{n=21}$} \end{center}\par
$\bullet$ {\bf order 34440} 
\begin{eqnarray}
{A_1(41)}
&\cong& \left< a,b\  |\ aba=bab,\,ab^2a=b^{19},\,  (a^3b^{-7})^2=\mathbf 1 \,\right>\nonumber
\end{eqnarray}
 \par\bigskip
$\bullet$ {\bf order 39732} 
\begin{eqnarray}
{A_1(43)}
&\cong& \left< a,b\  |\ aba=bab,\,ab^2a=b^{19},\,  (a^5b^{-7})^2=\mathbf 1 \,\right>\nonumber\\
&\cong& \left< a,b\  |\ aba=bab,\,ab^2a=b^{19},\,  (a^4b^{-12})^2 =\mathbf 1 \,\right>\nonumber
\end{eqnarray}
 \par\bigskip
$\bullet$ {\bf order 285852} 
\begin{eqnarray}
{A_1(83)}
&\cong& \left< a,b\  |\ aba=bab,\,ab^2a=b^{19},\, (a^3b^{-5})^2 =\mathbf 1 \,\right>\quad ^*\nonumber\\
&\cong& \left< a,b\  |\ aba=bab,\,ab^2a=b^{19},\,  (a^4b^{-6})^2 =\mathbf 1 \,\right>\quad ^*\nonumber
\end{eqnarray}
\par\bigskip
$\bullet$ {\bf order 976500} 
\begin{eqnarray}
{A_1(125)}
&\cong& \left< a,b\  |\ aba=bab,\,ab^2a=b^{19},\, (a^4b^7)^2 =\mathbf 1 \,\right>\quad ^*\nonumber\
\end{eqnarray}
\par\bigskip
$\bullet$ {\bf order 1024128} 
\begin{eqnarray}
{A_1(127)}
&\cong& \left< a,b\  |\ aba=bab,\,ab^2a=b^{19},\, (ab^{-2}ab^{-4})^2 =\mathbf 1 \,\right>\quad ^*\nonumber\
\end{eqnarray}
\par\bigskip

\begin{center} \underline{$\mathbf{n=23}$} \end{center}\par
$\bullet$ {\bf order 6072} 
\begin{eqnarray}
{A_1(23)}
&\cong& \left< a,b\  |\ aba=bab,\,ab^2a=b^{21},\,  (a^3b^{-7})^2=\mathbf 1 \,\right>\nonumber\\
&\cong& \left< a,b\  |\ aba=bab,\,ab^2a=b^{21},\,  (a^4b^{-11})^2=\mathbf 1 \,\right>\nonumber
\end{eqnarray}
 \par\bigskip
$\bullet$ {\bf order 51888} 
\begin{eqnarray}
{A_1(47)}
&\cong& \left< a,b\  |\ aba=bab,\,ab^2a=b^{21},\,  (a^3b^{-12})^2=\mathbf 1 \,\right>\nonumber\\
&\cong& \left< a,b\  |\ aba=bab,\,ab^2a=b^{21},\,  (a^3b^{-13})^2=\mathbf 1 \,\right>\quad ^*\nonumber\\
&\cong& \left< a,b\  |\ aba=bab,\,ab^2a=b^{21},\,  (a^4b^{-12})^2=\mathbf 1 \,\right>\quad ^*\nonumber
\end{eqnarray}
 \par\bigskip
$\bullet$ {\bf order 1285608} 
\begin{eqnarray}
{A_1(137)}
&\cong& \left< a,b\  |\ aba=bab,\,ab^2a=b^{21},\,  (a^3b^{-14})^2=\mathbf 1 \,\right>\quad ^*\nonumber
\end{eqnarray}
\par\bigskip
$\bullet$ {\bf order 1342740} 
\begin{eqnarray}
{A_1(139)}
&\cong& \left< a,b\  |\ aba=bab,\,ab^2a=b^{21},\,  (a^3b^{-16})^2=\mathbf 1 \,\right>\quad ^*\nonumber
\end{eqnarray}
\par\bigskip
$\bullet$ {\bf order 6004380} 
\begin{eqnarray}
{A_1(229)}
&\cong& \left< a,b\  |\ aba=bab,\,ab^2a=b^{21},\,  (a^3b^{-8})^2=\mathbf 1 \,\right>\quad ^*\nonumber
\end{eqnarray}
 \par\bigskip
$\bullet$ {\bf order 10626828} 
\begin{eqnarray}
{A_1(277)}
&\cong& \left< a,b\  |\ aba=bab,\,ab^2a=b^{21},\,  (a^3b^{-9})^2=\mathbf 1 \,\right>\quad ^*\nonumber
\end{eqnarray}
 \par\bigskip

\begin{center} \underline{$\mathbf{n=25}$} \end{center}\par
$\bullet$ {\bf order 60} 
\begin{eqnarray}
{A_1(5)}
&\cong& \left< a,b\  |\ aba=bab,\,ab^2a=b^{23},\,  (ab^{-2})^3=\mathbf 1 \,\right>\nonumber\\
&\cong& \left< a,b\  |\ aba=bab,\,ab^2a=b^{21},\,  (ab^{-3})^2=\mathbf 1 \,\right>\nonumber
\end{eqnarray}
Note: These presentations exhibits $A_1(5)$ as a factor of $D(2,3,25)$. Even so, the order of $a$ and $b$ turns out to be $5$, so it is also a factor of $D(2,3,5)$. \par\bigskip
$\bullet$ {\bf order 58800} 
\begin{eqnarray}
{A_1(49)}
&\cong& \left< a,b\  |\ aba=bab,\,ab^2a=b^{23},\,  (a^3b^{\pm 8})^2 =\mathbf 1 \,\right>\quad ^*\nonumber\\
&\cong& \left< a,b\  |\ aba=bab,\,ab^2a=b^{23},\,  (a^3b^7)^2 =\mathbf 1 \,\right>\quad ^*\nonumber
\end{eqnarray}
\par\bigskip
$\bullet$ {\bf order 515100} 
\begin{eqnarray}
{A_1(101)}
&\cong& \left< a,b\  |\ aba=bab,\,ab^2a=b^{23},\,  (a^3b^{-7})^2 =\mathbf 1 \,\right>\quad ^*\nonumber\\
&\cong& \left< a,b\  |\ aba=bab,\,ab^2a=b^{23},\,  (a^3b^{11})^2 =\mathbf 1 \,\right>\quad ^*\nonumber
\end{eqnarray}
\par\bigskip
$\bullet$ {\bf order 1653900} 
\begin{eqnarray}
{A_1(149)}
&\cong& \left< a,b\  |\ aba=bab,\,ab^2a=b^{23},\,  (a^3b^{-13})^2 =\mathbf 1 \,\right>\quad ^*\nonumber
\end{eqnarray}
 \par\bigskip
$\bullet$ {\bf order 1721400} 
\begin{eqnarray}
{A_1(151)}
&\cong& \left< a,b\  |\ aba=bab,\,ab^2a=b^{23},\,  (a^3b^{-5})^2 =\mathbf 1 \,\right>\quad ^*\nonumber
\end{eqnarray}
\par\bigskip

\begin{center} \underline{$\mathbf{n=27}$} \end{center}\par
$\bullet$ {\bf order 504} 
\begin{eqnarray}
{A_1(8)}
&\cong& \left< a,b\  |\ aba=bab,\,ab^2a=(ab^{-1}ab^{-12})^2b^{25} \,\right>\nonumber
\end{eqnarray}
Note: The order of $a$ and $b$ turns out to be $9$. \par\bigskip
$\bullet$ {\bf order 2448} 
\begin{eqnarray}
{A_1(17)}
&\cong& \left< a,b\  |\ aba=bab,\,ab^2a=b^{25},\, (a^4b^{-14})^2=\mathbf 1 \,\right>\nonumber
\end{eqnarray}
Note: The order of $a$ and $b$ turns out to be $9$. \par\bigskip
$\bullet$ {\bf order 74412} 
\begin{eqnarray}
{A_1(53)}
&\cong& \left< a,b\  |\ aba=bab,\,ab^2a=b^{25},\, (a^6b^{-13})^2a^{54}=\mathbf 1 \,\right>\quad ^*\nonumber
\end{eqnarray}
\par\bigskip
$\bullet$ {\bf order 647460} 
\begin{eqnarray}
{A_1(109)}
&\cong& \left< a,b\  |\ aba=bab,\,ab^2a=b^{25},\, (a^3b^8)^2=\mathbf 1 \,\right>\quad ^*\nonumber\\
&\cong& \left< a,b\  |\ aba=bab,\,ab^2a=b^{25},\, (ab^{-1}ab^{-6})^2=\mathbf 1 \,\right>\quad ^*\nonumber
\end{eqnarray}
 \par\bigskip
$\bullet$ {\bf order 9951120} 
\begin{eqnarray}
{A_1(271)}
&\cong& \left< a,b\  |\ aba=bab,\,ab^2a=b^{25},\, (a^3b^{-7})^2=\mathbf 1 \,\right>\quad ^*\nonumber
\end{eqnarray}
 \par\bigskip

\begin{center} \underline{$\mathbf{n=29}$} \end{center}\par
$\bullet$ {\bf order 12180} 
\begin{eqnarray}
{A_1(29)}
&\cong& \left< a,b\  |\ aba=bab,\,ab^2a=b^{27},\, (a^3b^{-9})^2=\mathbf 1 \,\right>\quad ^*\nonumber
\end{eqnarray}
\par\bigskip
$\bullet$ {\bf order 102660} 
\begin{eqnarray}
{A_1(59)}
&\cong& \left< a,b\  |\ aba=bab,\,ab^2a=b^{27},\, (a^3b^{-7})^2=\mathbf 1 \,\right>\quad ^*\nonumber
\end{eqnarray}
\par\bigskip

\begin{center} \underline{$\mathbf{n=31}$} \end{center}\par
$\bullet$ {\bf order 14880} 
\begin{eqnarray}
{A_1(31)}
&\cong& \left< a,b\  |\ aba=bab,\,ab^2a=b^{29},\, (a^3b^{-20})^2=\mathbf 1 \,\right>\quad ^*\nonumber
\end{eqnarray}
\par\bigskip
$\bullet$ {\bf order 32736} 
\begin{eqnarray}
{A_1(32)}
&\cong& \left< a,b\  |\ aba=bab,\,ab^2a=b^{29},\, (a^3b^{-7})^2=\mathbf 1 \,\right>\quad ^{*\ \dagger}\nonumber
\end{eqnarray}
\par\bigskip

\begin{center} \underline{$\mathbf{n=37}$} \end{center}\par
$\bullet$ {\bf order 25308} 
\begin{eqnarray}
{A_1(37)}
&\cong& \left< a,b\  |\ aba=bab,\,ab^2a=b^{35},\, (a^3b^{-24})^2=\mathbf 1 \,\right>\quad ^*\nonumber
\end{eqnarray}
\par\bigskip

\begin{center} \underline{$\mathbf{n=41}$} \end{center}\par
$\bullet$ {\bf order 34440} 
\begin{eqnarray}
{A_1(41)}
&\cong& \left< a,b\  |\ aba=bab,\,ab^2a=b^{39},\, (a^3b^{-13})^2a^{-82}=\mathbf 1 \,\right>\quad ^*\nonumber
\end{eqnarray}
\par\bigskip

\begin{center} \underline{$\mathbf{n=43}$} \end{center}\par
$\bullet$ {\bf order 39732} 
\begin{eqnarray}
{A_1(43)}
&\cong& \left< a,b\  |\ aba=bab,\,ab^2a=b^{41},\, (a^3b^{15})^2=\mathbf 1 \,\right>\quad ^*\nonumber
\end{eqnarray}
\par\bigskip

\begin{center} \underline{$\mathbf{n=47}$} \end{center}\par
$\bullet$ {\bf order 51888} 
\begin{eqnarray}
{A_1(47)}
&\cong& \left< a,b\  |\ aba=bab,\,ab^2a=b^{45},\, (a^3b^{-15})^2=\mathbf 1 \,\right>\quad ^*\nonumber
\end{eqnarray}
 \par
\end{scriptsize}

\end{document}